 \newtheoremstyle{mytheorem}
 {3pt}
 {3pt}
 {\slshape}
 {}
 {\bfseries}
 {.}
 { }
 {}
\numberwithin{equation}{section}
\theoremstyle{theorem}
\newtheorem{theorem}{Theorem}[section]
\newtheorem{lemma}[theorem]{Lemma}
\theoremstyle{definition}
\newtheorem{remark}{Remark}[section]
\newcommand{\Keywords}[1]{\ifthenelse{\isempty{#1}}{}{\smallskip \smallskip \noindent \textbf{Keywords}. #1}}
\newcommand{\MSC}[2][2010]{\ifthenelse{\isempty{#2}}{}{\smallskip \smallskip \noindent \textbf{#1MSC}. #2}}
\newcommand{\abstractnote}[1]{\ifthenelse{\isempty{#1}}{}{\smallskip \smallskip \noindent \textsuperscript{\dag}#1}}
\def\specialsection{\@startsection{section}{1}%
  \z@{\linespacing\@plus\linespacing}{.5\linespacing}%
  {\normalfont}}
\def\section{\@startsection{section}{1}%
  \z@{.7\linespacing\@plus\linespacing}{.5\linespacing}%
  {\normalfont\scshape}}
\patchcmd{\@settitle}{\uppercasenonmath\@title}{\Large\boldmath}{}{}
\patchcmd{\@settitle}{\begin{center}}{\begin{flushleft}}{}{}
\patchcmd{\@settitle}{\end{center}}{\end{flushleft}}{}{}
\patchcmd{\@setauthors}{\MakeUppercase}{\normalsize}{}{}
\patchcmd{\@setauthors}{\centering}{\raggedright}{}{}
\patchcmd{\section}{\scshape}{\large\bfseries\boldmath}{}{}
\patchcmd{\subsection}{\bfseries}{\bfseries\boldmath}{}{}
\renewcommand{\@secnumfont}{\bfseries}
\patchcmd{\@startsection}{\@afterindenttrue}{\@afterindentfalse}{}{}
\patchcmd{\abstract}{\leftmargin3pc}{\leftmargin1pc}{}{}
\def\maketitle{\par
  \@topnum\z@ 
  \@setcopyright
  \thispagestyle{empty}
  \ifx\@empty\shortauthors \let\shortauthors\shorttitle
  \else \andify\shortauthors
  \fi
  \@maketitle@hook
  \begingroup
  \@maketitle
  \toks@\@xp{\shortauthors}\@temptokena\@xp{\shorttitle}%
  \toks4{\def\\{ \ignorespaces}}
  \edef\@tempa{%
    \@nx\markboth{\the\toks4
      \@nx\MakeUppercase{\the\toks@}}{\the\@temptokena}}%
  \@tempa
  \endgroup
  \c@footnote\z@
  \@cleartopmattertags
}
\title{Congruences and recursions for the cubic partition} 
\author[S. Chern]{Shane Chern}
\address[S. Chern]{School of Mathematical Sciences, Zhejiang University, Hangzhou, 310027, China}
\email{shanechern@zju.edu.cn; chenxiaohang92@gmail.com}
\author[M. G. Dastidar]{Manosij Ghosh Dastidar}
\address[M. G. Dastidar]{Department of Mathematical Sciences, Pondicherry University, R. V. Nagar, Kalapet, Puducherry, PIN-605014, India}
\email{gdmanosij@gmail.com}
\date{}
\begin{document}

{\footnotesize\noindent To appear in \textit{Ramanujan J.}\\
\doi{10.1007/s11139-016-9852-7}}

\bigskip \bigskip

\maketitle

\begin{abstract}
Let $p_2(n)$ denote the number of cubic partitions. In this paper, we shall present two new congruences modulo $11$ for $p_2(n)$. We also provide an elementary alternative proof of a congruence established by Chan. Furthermore, we will establish a recursion for $p_2(n)$, which is a special case of a broader class of recursions.

\Keywords{Cubic partition, congruence, recursion.}

\MSC{Primary 11P83; Secondary 05A17.}
\end{abstract}

\section{Introduction}

A partition of a natural number $n$ is a nonincreasing sequence of positive integers whose sum equals $n$. Let $p(n)$ be the number of partitions. Among Ramanujan's discoveries, the following identity:
$$\sum_{n\ge 0}p(5n+4)q^n=5\frac{(q^5;q^5)_\infty^5}{(q;q)_\infty^6},$$
is regarded as his ``Most Beautiful Identity'' by both Hardy and MacMahon; see \cite[p. xxxv]{Ram2000}. Here as usual we denote
$$(a;q)_\infty = \prod_{n\ge 0}(1-aq^n).$$
This identity immediately leads to the following famous congruence:
$$p(5n+4)\equiv 0\pmod{5}.$$
Ramanujan also discovered two congruences with different moduli, namely
\begin{align*}
p(7n+5)&\equiv 0\pmod{7},\\
p(11n+6)&\equiv 0\pmod{11}.
\end{align*}

Motivated by Ramanujan's result, Chan \cite{Chan2010} introduced the notion of cubic partition of nonnegative integers. Let $p_2(n)$ be the number of such partitions. Its generating function is given by
\begin{equation}\label{eq:a}
\sum_{n\ge 0}p_2(n)q^n = \frac{1}{(q;q)_\infty (q^2;q^2)_\infty},\quad |q|<1.
\end{equation}
From an identity on the Ramanujan's cubic continued fraction, Chan established the following elegant identity:
\begin{equation}
\sum_{n\ge 0} p_2(3n+2)q^n=3\frac{(q^3;q^3)_\infty^3(q^6;q^6)_\infty^3}{(q;q)_\infty^4(q^2;q^2)_\infty^4},
\end{equation}
which immediately implies
\begin{equation}\label{eq:mod3}
p_2(3n+2)\equiv 0 \pmod{3}.
\end{equation}
Later on, many authors studied other Ramanujan-like congruences for $p_2(n)$. For example, Chen and Lin \cite{CL2009} found four new congruences modulo $7$ by using modular forms, whereas Xiong \cite{Xiong2011} established sets of congruences modulo powers of $5$.

We will present two new congruences modulo $11$ next in Sect.~\ref{sec:02}. Then in Sect.~\ref{sec:03}, we will provide an elementary alternative proof of \eqref{eq:mod3}. At last, we will establish a recursion for $p_2(n)$, which is a special case of a broader class of recursions.

\section{New congruences modulo 11 for $\boldsymbol{p_2(n)}$}\label{sec:02}

In this section, we shall present two new congruences modulo $11$ for $p_2(n)$. Unlike previous congruences modulo $5$ or $7$, the two congruences are of the type $p_2(297n+t)$, with $297=3^3\times 11$ not being the square of $11$. Our result is

\begin{theorem}\label{th:11}
For any nonnegative integer $n$,
\begin{equation}\label{eq:mod11}
p_2(297n+t)\equiv 0\pmod{11},
\end{equation}
where $t=62$ and $161$.
\end{theorem}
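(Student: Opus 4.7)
My plan has three stages. First, I would reduce the problem using the congruence $p_2(3n+2)\equiv 0\pmod 3$ from \eqref{eq:mod3}. Since $62\equiv 161\equiv 2\pmod 3$ and $\gcd(3,11)=1$, setting $a(n):=p_2(3n+2)/3$ and noting that $62=3\cdot 20+2$ and $161=3\cdot 53+2$ gives
$$
p_2(297n+62) = 3\,a(99n+20),\qquad p_2(297n+161) = 3\,a(99n+53),
$$
so the theorem is equivalent to
$$
a(99n+20)\equiv a(99n+53)\equiv 0\pmod{11}.
$$
From Chan's identity quoted in the introduction, the generating function of $a(n)$ is the eta-quotient
$$
A(q)=\frac{(q^3;q^3)_\infty^3(q^6;q^6)_\infty^3}{(q;q)_\infty^4(q^2;q^2)_\infty^4}.
$$

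Second, since $99=9\cdot 11$, I would exploit the classical mod-$11$ collapse $(q^k;q^k)_\infty^{11}\equiv(q^{11k};q^{11k})_\infty\pmod{11}$ to rewrite $A(q)$ modulo $11$ in a form whose denominator consists only of $(q^{11};q^{11})_\infty$-type factors. For instance, multiplying numerator and denominator by $(q;q)_\infty^7(q^2;q^2)_\infty^7$ gives
$$
A(q)\equiv\frac{(q;q)_\infty^7(q^2;q^2)_\infty^7(q^3;q^3)_\infty^3(q^6;q^6)_\infty^3}{(q^{11};q^{11})_\infty(q^{22};q^{22})_\infty}\pmod{11},
$$
so that the denominator plays no role in any $11$-dissection and one is reduced to dissecting an eta-quotient supported on small levels.

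Third, I observe that $99n+20=11(9n+1)+9$ and $99n+53=11(9n+4)+9$, so both target residues lie in the class $11m+9$ with $m\equiv 1$ or $4\pmod 9$. Accordingly, I would first perform the $11$-dissection of the numerator above to isolate $\sum_{m\ge 0}a(11m+9)\,q^m\pmod{11}$, using standard $11$-dissection formulas for products of $(q;q)_\infty^r$ arising via Jacobi's triple product, and then carry out two successive $3$-dissections (in the spirit of Chan's derivation of \eqref{eq:mod3}) to extract the subseries over $m\equiv 1,4\pmod 9$. Showing that both of these subseries vanish identically modulo $11$ would complete the proof.

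The main obstacle is the middle step: choosing the right power of $(q;q)_\infty(q^2;q^2)_\infty$ by which to multiply in order to produce a numerator whose $11$-dissection admits a closed form. A poor choice leads to unmanageable expansions; a good choice should expose an obvious vanishing on the desired residue classes. If an elementary dissection proves too unwieldy, a fallback is to recognize $A(q)$ as (a component of) a modular form on a congruence subgroup of level $66$, recast the claim as a congruence between $q$-series expansions of such forms, and verify it on finitely many Fourier coefficients via Sturm's bound---but the elementary route seems to be what the paper's introduction advertises.
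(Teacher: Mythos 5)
Your reductions are fine as far as they go: writing $p_2(297n+62)=3\,a(99n+20)$ and $p_2(297n+161)=3\,a(99n+53)$ with $a(n)=p_2(3n+2)/3$, invoking Chan's identity for the generating function $A(q)$, using $(q^k;q^k)_\infty^{11}\equiv(q^{11k};q^{11k})_\infty\pmod{11}$ to push the denominator into a series in $q^{11}$, and the arithmetic $99n+20=11(9n+1)+9$, $99n+53=11(9n+4)+9$ are all correct. But the heart of your main route --- ``perform the $11$-dissection of the numerator \dots\ using standard $11$-dissection formulas'' --- is precisely the step the authors single out as an open problem: their closing remark in Sect.~2 says they leave an elementary proof open \emph{because of} the difficulty of finding $11$-dissection formulas for the relevant products. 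There is no standard closed-form $11$-dissection of a product such as $(q;q)_\infty^7(q^2;q^2)_\infty^7(q^3;q^3)_\infty^3(q^6;q^6)_\infty^3$; even dissecting $(q;q)_\infty$ alone modulo $11$ requires Atkin-type machinery, and you have identified no mechanism that would force the coefficients on the classes $11m+9$ with $m\equiv 1,4\pmod 9$ to vanish modulo $11$. As written, the main route is a plan whose crucial step is exactly the hard part, so it does not constitute a proof.

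Your fallback is, in substance, what the paper actually does: the authors reduce modulo $11$ to the eta-quotient $(q;q)_\infty^{10}/\bigl((q^2;q^2)_\infty(q^{11};q^{11})_\infty\bigr)$ and apply the Radu--Sellers criterion (a Sturm-type bound adapted to eta-quotients and arithmetic progressions, working on $\Gamma_0(66)$), which reduces each congruence to checking coefficients up to $n=88$, verified by computer. If you pursue that route you must carry out its genuinely nontrivial part: choose the auxiliary exponent vector $r'$, verify the order conditions $p_{m,r}(\gamma)+p^*_{r'}(\gamma)\ge 0$ at a complete set of double-coset representatives (this is where holomorphy at the cusps is arranged), determine the set $P_{m,r}(t)$, compute the explicit bound $\lfloor v\rfloor$, and actually check the initial coefficients. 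None of that appears in your sketch, so as it stands the proposal establishes the theorem by neither route.
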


To prove the two congruences, we need to use a result of Radu and Sellers \cite[Lemma 2.4]{RS2011}, which can be tracked back to \cite[Lemma 4.5]{Radu2009}. Before introducing the result of Radu and Sellers, we will briefly interpret some notations.

Let $\Gamma:=SL_2(\mathbb{Z})$. For a positive integer $N$, the congruence subgroup $\Gamma_0(N)$ of level $N$ is defined by
$$\Gamma_0(N)=\left\{\left.\begin{pmatrix}a & b\\c & d\end{pmatrix}\ \right|\ c\equiv 0\pmod{N}\right\}.$$
It is known that
$$[\Gamma:\Gamma_0(N)]=N\prod_{p\mid N}(1+p^{-1}).$$
Moreover, we write
$$\Gamma_\infty=\left\{\left.\begin{pmatrix}1 & h \\0 & 1 \end{pmatrix}\ \right|\ h\in\mathbb{Z}\right\}.$$

For a positive integer $M$, let $R(M)=\{r:r=(r_{\delta_1},\ldots,r_{\delta_k})\}$ be the set of integer sequences indexed by the positive divisors $1=\delta_1<\cdots<\delta_k=M$ of $M$. Let $m$ be a positive integer and $[s]_m$ the set of all elements congruent to $s$ modulo $m$. Let $\mathbb{Z}_m^*$ denote the set of all invertible elements in $\mathbb{Z}_m$, and $\mathbb{S}_m$ denote the set of all squares in $\mathbb{Z}_m^*$. For $t\in\{0,\ldots,m-1\}$, let $\overline{\odot}_r$ be the map $\mathbb{S}_{24m}\times\{0,\ldots,m-1\}$$\to$$\{0,\ldots,m-1\}$ with
$$([s]_{24m},t)\mapsto [s]_{24m}\overline{\odot}_r t\equiv ts+\frac{s-1}{24}\sum_{\delta\mid M}\delta r_{\delta}\pmod{m},$$
and write $P_{m,r}(t)=\{[s]_{24m}\overline{\odot}_r t\ |\ [s]_{24m}\in\mathbb{S}_{24m}\}$.

Denote by $\Delta^*$ the set of tuples $(m,M,N,t,r=(r_\delta))$ satisfying conditions given in \cite[p. 2255]{RS2011}. Let $\kappa=\kappa(m)=\gcd(m^2-1,24)$. We set
$$p_{m,r}(\gamma)=\min_{\lambda\in\{0,\ldots,m-1\}}\frac{1}{24}\sum_{\delta\mid M}r_\delta\frac{\gcd^2(\delta(a+\kappa\lambda c),mc)}{\delta m},$$
and
$$p_{r'}^*(\gamma)=\frac{1}{24}\sum_{\delta\mid N} \frac{r'_\delta\gcd^2(\delta,c)}{\delta},$$
where $\gamma=\begin{pmatrix}a & b \\c & d \end{pmatrix}$, $r\in R(M)$, and $r'\in R(N)$.

Let
$$f_r(q):=\prod_{\delta\mid M}(q^{\delta};q^{\delta})_{\infty}^{r_\delta}=\sum_{n\ge 0}c_r(n)q^n$$
for some $r\in R(M)$. The lemma of Radu and Sellers is given as follows.

\begin{lemma}\label{le:01}
Let $u$ be a positive integer, $(m,M,N,t,r=(r_\delta))\in\Delta^*$, $r'=(r'_\delta)\in R(N)$, $n$ be the number of double cosets in $\Gamma_0(N)\backslash\Gamma/\Gamma_\infty$ and $\{\gamma_1,\ldots,\gamma_n\}$ $\subset\Gamma$ be a complete set of representatives of the double coset $\Gamma_0(N)\backslash\Gamma/\Gamma_\infty$. Assume that $p_{m,r}(\gamma_i)+p_{r'}^*(\gamma_i)\ge 0$ for all $i=1,\ldots,n$. Let $t_{\min} := \min_{t'\in P_{m,r}(t)}t'$ and
$$v:=\frac{1}{24}\left(\left(\sum_{\delta\mid M}r_\delta+\sum_{\delta\mid N}r'_\delta\right)[\Gamma:\Gamma_0(N)]-\sum_{\delta\mid N}\delta r'_\delta\right)-\frac{1}{24m}\sum_{\delta\mid M}\delta r_\delta-\frac{t_{\min}}{m}.$$
Then if
$$\sum_{n=0}^{\lfloor v \rfloor}c_r(mn+t')q^n\equiv 0 \pmod{u},$$
for all $t'\in P_{m,r}(t)$, then
$$\sum_{n\ge 0}c_r(mn+t')q^n\equiv 0 \pmod{u},$$
for all $t'\in P_{m,r}(t)$.
\end{lemma}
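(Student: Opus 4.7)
The plan is to prove the lemma by realizing the arithmetic-progression (AP) section of $f_r$ as (a piece of) a modular form on $\Gamma_0(N)$ and then invoking a Sturm-type congruence bound. Writing $q = e^{2\pi i\tau}$, the first step is to form the auxiliary function
$$\Psi(\tau) := \prod_{\delta\mid N}\eta(\delta\tau)^{r'_\delta}\cdot q^{h}\sum_{t'\in P_{m,r}(t)}\sum_{n\ge 0}c_r(mn+t')\,q^{n+t'/m},$$
where $\eta$ is the Dedekind eta function and $h$ is a suitable rational shift chosen to render all exponents integral. The definition of $\Delta^*$ together with the construction of the orbit $P_{m,r}(t)$ via the map $\overline{\odot}_r$ is engineered precisely so that the pooled dissection on the right transforms compatibly under $\Gamma_0(N)$; multiplication by the eta quotient $\prod_\delta\eta(\delta\tau)^{r'_\delta}$ then cancels the residual multiplier system, yielding a bona fide modular function on $\Gamma_0(N)$ of some weight $k$ and (possibly nontrivial) character.

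The second step is to check that $\Psi$ is holomorphic at every cusp of $\Gamma_0(N)$. Since the cusps are parametrized by the double cosets in $\Gamma_0(N)\backslash\Gamma/\Gamma_\infty$, it suffices to bound the order of $\Psi$ at each of the representatives $\gamma_1,\ldots,\gamma_n$. Newman's transformation formula for $\eta$ supplies the contribution $p^*_{r'}(\gamma_i)$ from the eta-quotient factor, while an analogous analysis of the AP-dissection of $f_r$ (with the minimum over $\lambda\in\{0,\ldots,m-1\}$ corresponding to the worst-case $t'$ in the orbit $P_{m,r}(t)$) produces $p_{m,r}(\gamma_i)$. Hence the assumption $p_{m,r}(\gamma_i)+p^*_{r'}(\gamma_i)\ge 0$ for all $i$ is exactly the condition that $\Psi$ is holomorphic at every cusp, so $\Psi$ is a genuine holomorphic modular form on $\Gamma_0(N)$.

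The third step is a Sturm-type bound modulo $u$: a holomorphic modular form on $\Gamma_0(N)$ of weight $k$ with $u$-integral coefficients, whose $q$-expansion coefficients up to index $\tfrac{k}{12}[\Gamma:\Gamma_0(N)]$ vanish modulo $u$, must have all coefficients vanish modulo $u$. The quantity $v$ appearing in the statement is exactly this Sturm bound translated back to the index $n$ of $c_r(mn+t')$: the leading sum $\tfrac{1}{24}\bigl((\sum r_\delta+\sum r'_\delta)[\Gamma:\Gamma_0(N)]-\sum\delta r'_\delta\bigr)$ records $\tfrac{k}{12}[\Gamma:\Gamma_0(N)]$, while the subtractions $\tfrac{1}{24m}\sum\delta r_\delta$ and $t_{\min}/m$ strip off the $q$-power shift introduced when assembling $\Psi$ from the raw AP-sections. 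If the coefficients vanish modulo $u$ up to index $\lfloor v\rfloor$ for every $t'\in P_{m,r}(t)$, then $\Psi\equiv 0\pmod u$, and since the eta factor is invertible in $q^{(\text{shift})}\mathbb{Z}[[q]]$, each AP-section $\sum_{n\ge 0}c_r(mn+t')q^n$ must itself vanish modulo $u$.

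The main obstacle is the orbit-and-cusp bookkeeping underlying the first two steps. One must verify that (i) the set $P_{m,r}(t)$ is exactly closed under the $\Gamma_0(N)$-action that scrambles the AP-indices, so that pooling over $P_{m,r}(t)$ is what turns a quasi-modular assembly into a modular form, and (ii) the local expansion of a dissected eta-quotient at an arbitrary cusp of $\Gamma_0(N)$ really is controlled by the combinatorial expression defining $p_{m,r}(\gamma_i)$. Both computations reduce to careful applications of Newman's transformation laws for $\eta$ in the framework developed in Radu's thesis \cite{Radu2009}; once they are in hand, the Sturm step and the final division by the eta factor are essentially formal.
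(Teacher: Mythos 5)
The paper does not prove this lemma at all: it is imported verbatim from Radu and Sellers \cite[Lemma 2.4]{RS2011}, which in turn rests on \cite[Lemma 4.5]{Radu2009}, so there is no in-paper proof to compare against. Your outline does faithfully reproduce the strategy of the original sources --- multiply the pooled AP-dissection by an eta-quotient indexed by the divisors of $N$ to obtain a form on $\Gamma_0(N)$, interpret $p_{m,r}(\gamma_i)+p^*_{r'}(\gamma_i)\ge 0$ as holomorphy at the cusps parametrized by $\Gamma_0(N)\backslash\Gamma/\Gamma_\infty$, and finish with a Sturm-type bound whose translation back to the index $n$ yields $v$. Be aware, though, that what you label the ``main obstacle'' is not a residual detail but the bulk of Radu's argument: verifying that $P_{m,r}(t)$ is the exact orbit under the index-scrambling action (this is where the conditions defining $\Delta^*$ and the map $\overline{\odot}_r$ earn their keep) and computing the cusp expansions of the dissected eta-quotient occupy most of \cite{Radu2009}. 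As a blind reconstruction of the proof strategy your sketch is correct in outline, but it defers rather than supplies the technical core.
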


\begin{proof}[Proof of Theorem \ref{th:11}]
By the binomial theorem and \eqref{eq:a}, one readily sees that
\begin{equation}\label{eq:a11}
\sum_{n\ge 0}p_2(n)q^n\equiv\frac{(q;q)_\infty^{10}}{(q^2;q^2)_\infty(q^{11};q^{11})_\infty}=:\sum_{n\ge 0}g_{2,11}(n)q^n \pmod{11}.
\end{equation}

We first consider the case of $p_2(297n+62)$, and set
$$(m,M,N,t,r=(r_1,r_2,r_{11},r_{22}))=(297,22,66,62,(10,-1,-1,0))\in\Delta^*.$$
By the definition of $P_{m,r}(t)$, we obtain
$$P_{m,r}(t)=\left\{t'\ |\ t'\equiv ts-(s-1)/8\ (\bmod\ m),0\le t'\le m-1,[s]_{24m}\in\mathbb{S}_{24m}\right\}.$$
We readily verify that $P_{m,r}(t)=\{62\}$, and set
$$r'=(r'_1,r'_2,r'_3,r'_6,r'_{11},r'_{22},r'_{33},r'_{66})=(4, 2, 0, 0, 0, 1, 0, 0).$$
Now let
$$\gamma_\delta=\begin{pmatrix}
1 & 0 \\
\delta & 1 
\end{pmatrix}.$$
It follows by \cite[Lemma 2.6]{RS2011} that $\{\gamma_\delta:\delta\mid N\}$ contains a complete set of representatives of the double coset $\Gamma_0(N)\backslash\Gamma/\Gamma_\infty$. Since all these constants satisfy the assumption of Lemma \ref{le:01}, we obtain the upper bound $\lfloor v \rfloor=88$. Through a similar process, one may see the the upper bound $\lfloor v \rfloor$ for the $p_2(297n+161)$ case is also $88$. By Lemma \ref{le:01}, we only need to verify terms up to this bound.

Now we will complete our proof with the help of \textit{Mathematica}. We first note that
$$p_2(n)=\sum_{\substack{i+2j=n\\i,j\ge 0}}p(i)p(j).$$
Note also that $p(n)$ is computable by the \textit{Mathematica} function \texttt{PartitionsP}. One readily verifies that \eqref{eq:mod11} holds for both $t=62$ and $161$ when $n\le 88$. This ends the proof of Theorem \ref{th:11}.
\end{proof}

\begin{remark}
It is still natural to ask if there are elementary proofs of the two congruences. Considering the difficulty of finding $11$-dissection formulas for some $q$-series products, we leave this as an open problem. 
\end{remark}

\section{An elementary alternative proof for Chan's congruence}\label{sec:03}

Although we fail to give an elementary proof for our Theorem \ref{th:11}, we do find an elementary alternative proof for Chan's congruence \eqref{eq:mod3}.

Note that
\begin{align*}
\sum_{n\ge 0}p_2(n)q^n &= \frac{1}{(q;q)_\infty (q^2;q^2)_\infty}=\prod_{n\ge 1}\frac{1}{1-q^n}\prod_{n\ge 1}\frac{1}{1-q^{2n}}\\
&= \prod_{n\ge 1}\frac{1-q^n}{1+q^n}\left(\prod_{n\ge 1}\frac{1}{1-q^n}\right)^3.
\end{align*}
It is well known that
$$\sum_{n\ge 0}s(n^2)q^{n^2}:=1+2\sum_{n\ge 1}(-q)^{n^2}=\prod_{n\ge 1}\frac{1-q^n}{1+q^n};$$
see \cite[Chapter 16, Entry 22(i)]{Ber1991} and \cite[Chapter 16, Eq. (22.4)]{Ber1991}.
We therefore have
\begin{equation}
p_2(n)=\sum_{\substack{m^2+i+j+k=n\\m,i,j,k\ge 0}}s(m^2)p(i)p(j)p(k).
\end{equation}

Since $3n+2-m^2\equiv 1$ or $2$ (mod $3$), at least two of $i$, $j$, $k$, the solution to
\begin{equation}\label{eq:ijk}
i+j+k=3n+2-m^2,
\end{equation}
are distinct. If the pairwise distinct triple $(i,j,k)$ is a solution to \eqref{eq:ijk}, then any permutation of $(i,j,k)$ [viz., $(j,k,i)$, etc.] is a solution to \eqref{eq:ijk}. If $i=j\ne k$, then $(i,k,i)$ and $(k,i,i)$ are also solutions to \eqref{eq:ijk}. We therefore obtain
\begin{align}
p_2(3n+2)=&6\sum_{\substack{m^2+i+j+k=3n+2\\m\ge 0,i>j>k\ge 0}}s(m^2)p(i)p(j)p(k)\notag\\
&+3\sum_{\substack{m^2+i+j+k=3n+2\\m,i,j,k\ge 0,\ i=j\ne k}}s(m^2)p(i)p(j)p(k).
\end{align}
This leads to
\begin{theorem}[Chan]\label{th:Chan}
For any nonnegative integer $n$,
\begin{equation}\label{eq:mod3b}
p_2(3n+2)\equiv 0\pmod{3}.
\end{equation}
\end{theorem}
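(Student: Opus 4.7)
The plan is to complete the argument already set up in the excerpt by observing that the displayed decomposition of $p_2(3n+2)$ is exhaustive and that both coefficients appearing in it are divisible by~$3$. The excerpt has rewritten $p_2(n)$ through the classical identity $\prod_{n\ge 1}(1-q^n)/(1+q^n)=1+2\sum_{n\ge 1}(-q)^{n^2}$, and has split the weighted count of quadruples $(m,i,j,k)$ with $m^2+i+j+k=3n+2$ into two buckets: pairwise distinct triples $(i,j,k)$ (each unordered triple contributing $3!=6$ ordered ones) and triples with exactly two equal entries, say $i=j\ne k$ (each such unordered pair contributing three ordered triples $(i,i,k)$, $(i,k,i)$, $(k,i,i)$).

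The only point that needs verification is that no third case is missing, i.e.\ that the triple $(i,j,k)$ cannot have $i=j=k$. I would rule this out by a direct congruence argument: if $i=j=k$, then
\[
i+j+k = 3i \equiv 0\pmod{3}.
\]
On the other hand, $3n+2-m^2\equiv 2-m^2\pmod{3}$, and since $m^2\equiv 0$ or $1\pmod{3}$, we have $3n+2-m^2\equiv 2$ or $1\pmod{3}$, never $0$. This contradicts $i+j+k=3n+2-m^2$, so the bucket decomposition exhibited in the excerpt indeed covers every solution.

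Once this is in place, the congruence is immediate: the first bucket carries the prefactor $6$ and the second the prefactor $3$, both divisible by~$3$, and hence $p_2(3n+2)\equiv 0\pmod{3}$, which is exactly \eqref{eq:mod3b}. I do not anticipate any real obstacle — the heart of the proof is the ``at least two of $i,j,k$ are distinct'' observation already stated informally in the excerpt, and the finishing step amounts only to spelling out that the alternative $i=j=k$ is excluded by a residue argument modulo~$3$.
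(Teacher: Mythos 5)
Your proposal is correct and follows the paper's own argument: the same representation $p_2(n)=\sum_{m^2+i+j+k=n}s(m^2)p(i)p(j)p(k)$, the same observation that $3n+2-m^2\equiv 1$ or $2\pmod 3$ rules out $i=j=k$, and the same grouping into orbits of size $6$ and $3$. Your explicit residue computation merely spells out the step the paper states in one line, so there is nothing to add or correct.
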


\section{Recursion for the cubic partition}

We know that the popular recursion of $p(n)$ links partitions to the divisor function. In this section, we wish to show that a similar recursion applies to $p_2(n)$. Actually, this is a special case of recursions for two-color partitions where one of the colors appears only in parts that are multiples of $k$. Let $p_k(n)$ denote the number of such partitions. According to \cite{ABD2015}, its generating function is
\begin{equation}\label{eq:pk}
\sum_{n\ge 0}p_k(n)q^n=\frac{1}{(q;q)_\infty(q^k;q^k)_\infty},\quad |q|<1.
\end{equation}
For properties of $p_k(n)$, the reader may refer to \cite{ABD2015,Chern2015}.

Recall that Ford's recursion for $p(n)$ is as follows:
\begin{equation}
p(n)=\frac{1}{n}\sum_{m=1}^n \sigma(m)p(n-m),
\end{equation}
where $\smash{\sigma(n)=\sum_{d\mid n}d}$; see \cite{Ford1931}. The reader may also refer to the papers of Erd\"os \cite{Erdos1942} and Andrews and Deutsch \cite{AD2015} for other interesting aspects of this identity. Let $\sigma^{(k)}(n)$ denote the sum of $k$-labeled divisors of $n$, that is, the multiples of $k$ have two labels. For example, $\sigma^{(2)}(4)=1+2_1+2_2+4_1+4_2=13$. Our result is
\begin{theorem}\label{th:pk}
For any nonnegative integer $n$,
\begin{equation}\label{eq:pkre}
p_k(n)=\frac{1}{n}\sum_{m=1}^n \sigma^{(k)}(m)p_k(n-m).
\end{equation}
\end{theorem}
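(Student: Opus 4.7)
The natural approach is logarithmic differentiation of the generating function in \eqref{eq:pk}, mirroring the classical derivation of Ford's recursion for $p(n)$.

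First I would write $F(q):=\sum_{n\ge 0}p_k(n)q^n=\frac{1}{(q;q)_\infty(q^k;q^k)_\infty}$ and compute
$$\log F(q)=-\sum_{n\ge 1}\log(1-q^n)-\sum_{n\ge 1}\log(1-q^{kn}).$$
Differentiating and multiplying by $q$ gives
$$\frac{qF'(q)}{F(q)}=\sum_{n\ge 1}\frac{nq^n}{1-q^n}+\sum_{n\ge 1}\frac{knq^{kn}}{1-q^{kn}}.$$

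Next I would convert each Lambert-type series into a divisor-sum generating function by expanding $1/(1-q^{nj})$ as a geometric series and collecting coefficients of $q^m$. The first sum produces $\sum_{m\ge 1}\sigma(m)q^m$, while the second, after setting $m=k\ell$ and $d=ke$, produces $\sum_{\ell\ge 1}k\sigma(\ell)q^{k\ell}=\sum_{m\ge 1}[k\mid m]\,k\sigma(m/k)\,q^m$. The key combinatorial identification is then
$$\sigma^{(k)}(m)=\sigma(m)+[k\mid m]\,k\sigma(m/k),$$
which follows immediately from the definition of the $k$-labeled divisor function: the ordinary divisors of $m$ contribute $\sigma(m)$, and each divisor of $m$ that is a multiple of $k$ is counted a second time, contributing $k\sigma(m/k)$ when $k\mid m$. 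Consequently
$$\frac{qF'(q)}{F(q)}=\sum_{m\ge 1}\sigma^{(k)}(m)\,q^m.$$

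Finally, clearing denominators gives $qF'(q)=F(q)\sum_{m\ge 1}\sigma^{(k)}(m)q^m$, and extracting the coefficient of $q^n$ (for $n\ge 1$) yields
$$np_k(n)=\sum_{m=1}^{n}\sigma^{(k)}(m)\,p_k(n-m),$$
which is \eqref{eq:pkre}. There is no real obstacle here: the proof is essentially routine once one recognises the labelled divisor sum as exactly what the two Lambert series combine to give. The only point requiring a small amount of care is the bookkeeping that turns $\sum_{n\ge 1}kn q^{kn}/(1-q^{kn})$ into the ``double-labelling'' contribution, which is why the statement is phrased with $\sigma^{(k)}$ rather than $\sigma$.
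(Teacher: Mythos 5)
Your proposal is correct and follows essentially the same route as the paper: logarithmic differentiation of $F(q)=1/\bigl((q;q)_\infty(q^k;q^k)_\infty\bigr)$, expansion of the two Lambert series into divisor sums, identification of $\sigma(m)+[k\mid m]\,k\sigma(m/k)$ with $\sigma^{(k)}(m)$, and coefficient extraction. The only cosmetic difference is that the paper differentiates $\log G$ with $G=1/F$ rather than $\log F$ directly, which is the same computation.
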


\begin{proof}
Taking
$$F(q)=\sum_{n\ge 0}p_k(n)q^n,$$
then
$$qF'(q)=\sum_{n\ge 1}np_k(n)q^n.$$
Let $G(q)=1/F(q)=\prod_{n\ge 1}\{(1-q^n)(1-q^{kn})\}$, we have
\begin{equation}\label{eq:proof4.1}
qF'(q)=-q\frac{G'(q)}{G(q)^2}=-q\frac{G'(q)}{G(q)}F(q).
\end{equation}
Note that
\begin{align*}
-q\frac{G'(q)}{G(q)}&=-q\left(\log G(q)\right)'\\
&=\sum_{n\ge 1}\frac{nq^n}{1-q^n}+\sum_{n\ge 1}\frac{knq^{kn}}{1-q^{kn}}.
\end{align*}
It is also known that
$$\frac{nq^n}{1-q^n}=\sum_{k\ge 1}nq^{kn},$$
we therefore obtain
$$-q\frac{G'(q)}{G(q)}=\sum_{n\ge 1}\left(\sum_{d\mid n}d+\sum_{\substack{d\mid n\\k\mid d}}d\right)q^n=\sum_{n\ge 1}\sigma^{(k)}(n)q^n.$$
Combining it with \eqref{eq:proof4.1}, one immediately sees that
$$np_k(n)=\sum_{m=1}^n \sigma^{(k)}(m)p_k(n-m).$$
This ends the proof of Theorem \ref{th:pk}.
\end{proof}

\bibliographystyle{amsplain}

\end{document}